\newtheorem{theorem}{Theorem}[section]
\theoremstyle{definition}
\newtheorem*{theorem*}{Theorem}
\numberwithin{equation}{section}
\def\R{{\mathbb R}}
\def\e{{\rm e}}
\def\d{{\,\rm d}}
\begin{document}

\baselineskip=16.99pt


\title[]{Geometric aspects of the Harnack Inequality for a nonlocal heat equation}
\author[M. Dembny]{Mateusz Dembny}
\date{}
\address{
\begin{itemize}[label={}]
\item Faculty of Mathematics, Mechanics and Informatics\\ University of Warsaw\\
ul.\,Banacha 2, 02-097 Warsaw, Poland
\end{itemize}
}
\email{m.dembny@student.uw.edu.pl}

\author[M. Sierżęga]{Mikołaj Sierżęga}

\address{\begin{itemize}[label={}]
\item Faculty of Mathematics, Mechanics and Informatics\\ 
University of Warsaw\\
ul.\,Banacha 2, 02-097 Warsaw, Poland
\item Department of Mathematics Cornell University,
583 Malott Hall Ithaca, NY 14853 USA
\end{itemize}
}
\email{m.sierzega@uw.edu.pl, mikolaj.sierzega@cornell.edu}

\begin{abstract}
We establish a connection between a sharp double-sided Harnack bound for positive solutions of a fractional heat equation and the circular geometry in higher dimensions. The present work extends and generalizes the results obtained in the preceding paper.
\end{abstract}

\subjclass[2020]{Primary 35R11; Secondary 35K08}

\keywords{Harnack inequality, Widder uniqueness theorem, fractional heat equation}

\maketitle

\section{Introduction}
In \cite{DEMBNY_SIERZEGA_2023}, we were inspired by a natural problem, put forward in \cite{GAROFALO_2019}, of finding an appropriate extension for the Li-Yau inequality  to the context of nonlocal diffusion and in particular to the canonical model of one spatial dimension \textit{half-laplacian} heat equation

\begin{equation}\label{FH}
\partial_t u+(-\Delta )^\frac{1}{2}u=0 \quad \mbox{ on }\quad S_T,
\end{equation}
with 
\[
    (-\Delta)^\frac{1}{2}f(x)=-\frac{1}{\pi}P.V.\int_{\R}\frac{f(x)-f(y)}{|x-y|^{2}}\d y, \quad \mbox{ and  }\quad S_T=\R\times (0,T).
\]

Many previous results by various authors were based on the definition of a strong solution, which significantly restricted the class of functions satisfying the fractional heat equation. Let us recall some of these results.

Following \cite{BARRIOS_ET_AL_2014}, we will say that $u(x,t)$ is a \emph{strong solution} of the fractional heat equation \eqref{FH} in the strip $S_T$ if 
\begin{itemize}
    \item $\partial_t u\in C(\R\times (0,T))$,
    \item $ u\in C(\R\times [0,T))$,
    \item the equation \eqref{FH} is satisfied pointwise for every $(x,t)\in S_T$. 
\end{itemize}
The same definition, with obvious modifications, applies to the notion of the strong solution for the classical heat equation. The following elegant inequality has been proven.

\begin{theorem}[Thm.\,3.2  + Prop.\,3.3 in \cite{WEBER_ZACHER_2022}]
Let $u:S_T\mapsto (0,\infty)$ be a strong solution to the fractional heat equation \eqref{FH}. Then, the Li-Yau type inequality  
\[
    -(-\Delta )^\frac{1}{2}\ln u\geq -\frac{1}{ 2 t}
\]
holds in $S_T$. 
\end{theorem}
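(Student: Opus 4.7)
The plan is to follow the classical Li--Yau blueprint, adapted to the nonlocal operator $(-\Delta)^{1/2}$. Setting $v:=\ln u$ and $\phi:=-(-\Delta)^{1/2}v$, the target inequality $\phi\ge -\tfrac{1}{2t}$ is equivalent (after multiplying by $t>0$) to
\[
F(x,t)\;:=\;t\,\phi(x,t)+\tfrac{1}{2}\;\ge\;0\qquad\text{on }S_T.
\]
Since $u$ is a strong solution, $\phi$ is locally bounded and $F(\cdot,t)\to \tfrac{1}{2}$ as $t\to 0^+$, providing the initial input for a parabolic comparison argument.

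The first structural ingredient is a Jensen-type inequality from the concavity of $\ln$: the pointwise bound $\ln u(x)-\ln u(y)\ge 1-u(y)/u(x)$, integrated against the singular kernel $|x-y|^{-2}$, yields
\[
(-\Delta)^{1/2} v \;\ge\; \frac{(-\Delta)^{1/2}u}{u}\;=\;-\partial_t v,
\]
with the second equality supplied by \eqref{FH}. Hence the nonnegative fractional \emph{carr\'e du champ}
\[
\Gamma(v)\;:=\;(-\Delta)^{1/2}v-\frac{(-\Delta)^{1/2}u}{u}\;\ge\;0
\]
measures the Jensen defect and satisfies $\partial_t v + (-\Delta)^{1/2}v=\Gamma(v)$. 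Applying $-(-\Delta)^{1/2}$ to this identity produces the natural evolution equation
\[
\partial_t\phi+(-\Delta)^{1/2}\phi\;=\;-(-\Delta)^{1/2}\Gamma(v),
\]
which, combined with a pointwise fractional Bochner-type lower bound of the form $\Gamma(v)\ge c\,\phi^{2}$ (the nonlocal surrogate of $|\nabla^{2}v|^{2}\ge(\Delta v)^{2}$), would give a favorable differential inequality for $F$. A nonlocal parabolic maximum principle applied at a would-be first negative value of $F$, combined with the initial data $F(\cdot,0^+)=\tfrac12>0$, then forces $F\ge 0$ throughout $S_T$.

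The main obstacle is the fractional Bochner inequality itself: what is an elementary pointwise identity in the local case becomes a comparison between two singular-integral expressions in the nonlocal one, and must be obtained by delicate kernel manipulations — typically by splitting small and large scales and exploiting the symmetric representation $\int[\,2f(x)-f(x+z)-f(x-z)\,]|z|^{-2}\,\d z$. A promising alternative route I would try in parallel is the Caffarelli--Silvestre harmonic extension: because the semigroup generated by $-(-\Delta)^{1/2}$ is the Poisson semigroup, every positive solution of \eqref{FH} is the trace on $\{t=0\}$ of a positive harmonic function on the upper half-plane $\{t>0\}$, and under this identification $(-\Delta)^{1/2}_x\ln u(x,t)$ becomes the normal derivative at the boundary of the harmonic extension of $\ln u(\cdot,t)$. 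Yau's classical gradient estimate $|\nabla\log h|\le 1/y$ for positive harmonic functions on the half-plane then controls precisely that normal derivative, and matching normalizations is what is expected to produce the sharp constant $\tfrac{1}{2t}$.
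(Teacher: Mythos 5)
This theorem is not proved in the paper at all: it is quoted verbatim from Weber--Zacher \cite{WEBER_ZACHER_2022} as background, with a reference to their Thm.\,3.2 and Prop.\,3.3, and the present paper's contribution is elsewhere. So there is no internal proof to compare against. Judged on its own merits, though, your proposal has two serious gaps, one in each of the routes you sketch.

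\textbf{Route~1 (Bochner plus maximum principle) does not close.} You need a pointwise ``fractional Bochner'' lower bound $\Gamma(v)\gtrsim \phi^2$, where $\Gamma(v)=(-\Delta)^{1/2}v-u^{-1}(-\Delta)^{1/2}u$ and $\phi=-(-\Delta)^{1/2}v$. This is precisely the curvature--dimension inequality $CD(0,N)$ with finite $N$ for the generator $-(-\Delta)^{1/2}$, and it is known to \emph{fail}: purely jump Dirichlet forms satisfy $CD(0,\infty)$ but not $CD(0,N)$ for any finite $N$. That is the structural reason the Li--Yau machinery does not transfer verbatim to nonlocal operators and why this problem was open. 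Worse, even if such a pointwise bound held, it would not help: after applying $-(-\Delta)^{1/2}$ to $\partial_t v+(-\Delta)^{1/2}v=\Gamma(v)$ you get $\partial_t\phi+(-\Delta)^{1/2}\phi=-(-\Delta)^{1/2}\Gamma(v)$, and at a touching point of $F=t\phi+\tfrac12$ the quantity you must control is the \emph{nonlocal} expression $-(-\Delta)^{1/2}\Gamma(v)$ evaluated there, which is not a pointwise function of $\Gamma(v)$ or of $\phi$ at that point. The local Li--Yau maximum principle exploits that $\Delta|\nabla v|^2$ produces $2|\nabla^2 v|^2$ plus a $\nabla F$-transport term, both sign-controllable at a touching point; here there is no analogous cancellation, and the nonnegativity of $\Gamma(v)$ is not strong enough.

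\textbf{Route~2 (Caffarelli--Silvestre plus Yau) conflates two different extensions.} You correctly note that for the half-Laplacian, positive strong solutions of \eqref{FH} are positive harmonic in $(x,t)$ on the half-plane. But $(-\Delta)^{1/2}_x\ln u(\cdot,t)(x)$ equals $-\partial_s W(x,s)|_{s=0}$ where $W(\cdot,s)=P_s*\ln u(\cdot,t)$ is the harmonic extension \emph{of $\ln u(\cdot,t)$}, whereas Yau's gradient estimate controls $\partial_t\ln u(x,t)$, the $t$-derivative of $\ln u$ itself along the harmonic extension \emph{of $u$}. These are not the same because $\ln$ of a harmonic function is not harmonic; Jensen only gives $W(x,s)\le \ln u(x,t+s)$, from which one recovers $-(-\Delta)^{1/2}\ln u\le\partial_t\ln u$---an \emph{upper} bound on the quantity you need to bound from \emph{below}. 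Yau also gives $|\nabla\ln u|\le 1/t$ in this setting, which does not force $\partial_t\ln u\ge -\tfrac1{2t}$; indeed $\partial_t\ln P_t(x)=-1/t$ at $x=0$ for the Poisson kernel. (The kernel is not a strong solution in the sense used here, since it is not continuous down to $t=0$; the proof must use that hypothesis in an essential way, while your plan invokes it only through the unproved assertion that $F(\cdot,0^+)=\tfrac12$.)

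In short, neither route as written establishes the inequality; the missing ingredient is a genuinely nonlocal mechanism replacing the Bochner identity, which in Weber--Zacher's argument comes from Jensen applied against the semigroup representation of the solution rather than from a pointwise maximum principle.
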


This inequality may be then employed to derive a Harnack bound. 
\begin{theorem}[Thm.\,5.2 in \cite{WEBER_ZACHER_2022}]
Let $0<t_1<t_2<\infty$ and $x_1,x_2\in \R$. If $u$ is a strong positive solution of \eqref{FH} on $\R\times [0,\infty)$, then 
\begin{equation}\label{ZW2}
    \frac{u(x_2,t_2)}{u(x_1,t_1)}\geq \sqrt{\frac{t_1}{t_2}}\e^{-C\left[1+\frac{|x_2-x_1|^2}{(t_2-t_1)^2}\right]},
\end{equation}
for some positive constant $C$.
\end{theorem}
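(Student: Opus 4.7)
The plan is to prove \eqref{ZW2} in two stages: first extract a one-point-in-space Harnack estimate from the Li-Yau inequality, and then transport it spatially through a Poisson-type representation of positive strong solutions.

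For the temporal stage, I would combine the Li-Yau estimate, written as $(-\Delta)^{1/2}\ln u \leq 1/(2t)$, with the nonlocal Jensen-type inequality
\[
(-\Delta)^{1/2}\ln u \geq \frac{(-\Delta)^{1/2}u}{u},
\]
which is immediate from the pointwise concavity bound $\ln u(x)-\ln u(y) \geq (u(x)-u(y))/u(x)$ integrated against the singular kernel. Using the equation $(-\Delta)^{1/2}u = -\partial_t u$, this yields $\partial_t \ln u \geq -1/(2t)$, and integration in $t$ at a fixed spatial point gives the one-point Harnack bound $u(x,t_2)/u(x,t_1) \geq \sqrt{t_1/t_2}$.

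For the spatial stage, I would invoke a Widder-type representation for positive strong solutions on $\R\times[0,\infty)$, namely $u(x,t) = (p_t \ast \mu)(x)$ for a nonnegative Borel measure $\mu$, with Poisson kernel $p_t(x) = t/[\pi(t^2+x^2)]$. The semigroup identity $p_\tau \ast p_{t_1} = p_{t_2}$, with $\tau = t_2-t_1$, then produces
\[
u(x,t_2) = \int_{\R} p_\tau(x-y)\, u(y,t_1)\, \d y,
\]
so that $u(x_2,t_2)/u(x_1,t_2) \geq \inf_{y\in\R} p_\tau(x_2-y)/p_\tau(x_1-y)$. The ratio $[\tau^2+(x_1-y)^2]/[\tau^2+(x_2-y)^2]$ has a single relevant critical point solving a quadratic in $y$, and a short computation shows the infimum equals $(\sqrt{D^2+4\tau^2}-D)^2/(4\tau^2)$, where $D = |x_2-x_1|$. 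Coarsening via $\sqrt{D^2+4\tau^2}\leq D+2\tau$ and the elementary inequality $2(D/\tau) \leq 1+(D/\tau)^2$ brings this bound into the exponential form $e^{-C(1+D^2/\tau^2)}$ required by \eqref{ZW2}. Chaining with the time Harnack closes the argument.

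The main obstacle is not the kernel computation but the availability of the Widder-type representation itself: it is precisely here that the strong-solution hypothesis, together with the exclusion of exponential spatial growth, enters in an essential way, and it is the natural gateway through which the Widder theorem advertised in the keywords must be invoked. Once the representation is in hand, the rest reduces to the semigroup identity and elementary calculus on the Cauchy kernel.
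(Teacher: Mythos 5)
This theorem is not proved in the present paper; it is quoted verbatim from Weber--Zacher as background, and the only reference the paper makes to its proof is the remark that the Li--Yau estimate ``may be \dots employed to derive a Harnack bound.'' So there is no in-house proof to compare against, and Weber--Zacher's own route (integration of the Li--Yau differential inequality along a spatial path) is quite different from yours. Your proposal is instead a hybrid of the two strategies appearing in this area: the Li--Yau-based temporal estimate is taken from Weber--Zacher, while the spatial transport is handled via a Widder-type Poisson representation in the spirit of the Dembny--Sier\.z\k{e}ga paper \cite{DEMBNY_SIERZEGA_2023}. That is a legitimately different decomposition of the problem, and its payoff is real: with the Poisson representation in hand, your infimum computation gives the polynomial lower bound
\[
\frac{u(x_2,t_2)}{u(x_1,t_1)}\;\ge\;\sqrt{\tfrac{t_1}{t_2}}\cdot\frac{\sqrt{D^2+4\tau^2}-D}{\sqrt{D^2+4\tau^2}+D},\qquad D=|x_2-x_1|,\ \tau=t_2-t_1,
\]
which is strictly stronger than the exponential form \eqref{ZW2} and, after your coarsening steps, implies it. This is exactly the phenomenon the present paper emphasizes (polynomial rather than exponential decay being the natural rate for the Cauchy kernel), and your computation of the critical point, the identity $\frac{(\sqrt{D^2+4\tau^2}-D)^2}{4\tau^2}=\frac{\sqrt{D^2+4\tau^2}-D}{\sqrt{D^2+4\tau^2}+D}$, and the elementary inequalities used to reach $e^{-C(1+D^2/\tau^2)}$ are all correct. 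The Jensen-type inequality $(-\Delta)^{1/2}\ln u\ge u^{-1}(-\Delta)^{1/2}u$ is also correct (in the standard sign convention for the half-Laplacian) and its use to derive $\partial_t\ln u\ge -1/(2t)$, hence $u(x,t_2)/u(x,t_1)\ge\sqrt{t_1/t_2}$, is sound.

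The genuine gap is precisely the one you flag at the end: the argument hinges on the existence of a Widder representation $u(\cdot,t)=p_t*\mu$ for every positive strong solution on $\R\times[0,\infty)$, and neither the semigroup identity $u(\cdot,t_2)=p_\tau*u(\cdot,t_1)$ nor the resulting spatial Harnack bound can be asserted without it. A strong solution in the Barrios et al.\ sense only satisfies the equation pointwise; without a representation or uniqueness theorem, there is no reason the solution at time $t_2$ equals the Poisson extension of its time-$t_1$ slice. Supplying this step requires a nontrivial nonlocal Widder theorem (this is why Widder uniqueness appears in the keywords of the present paper and is the analytic backbone of \cite{DEMBNY_SIERZEGA_2023}), and it is not ``immediate'' from the strong-solution hypothesis alone. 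Two smaller technical points that deserve a sentence each in a complete write-up: the Jensen inequality must be checked to be well-posed, i.e.\ the difference of the two principal-value integrals converges absolutely near the diagonal (it does, since $\ln\frac{u(x)}{u(y)}-1+\frac{u(y)}{u(x)}=O(|y-x|^2)$) and the individual integrals require the usual growth condition $\int|\ln u(y)|(1+y^2)^{-1}\d y<\infty$; and the chaining $\frac{u(x_2,t_2)}{u(x_1,t_1)}=\frac{u(x_2,t_2)}{u(x_1,t_2)}\cdot\frac{u(x_1,t_2)}{u(x_1,t_1)}$ should be stated explicitly so that the two stages are seen to compose. With the Widder step justified, the proposal would be a correct and sharper alternative proof.
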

For a broader formulation and additional results, the reader is referred to  \cite{WEBER_ZACHER_2022}, where the authors discuss in detail how their estimate differs from the classical Hadamard–Pini inequality. Notably, inequality \eqref{ZW2} does not become an identity when evaluated on the fractional heat kernel for any particular choice of the points $(x_i,t_i)$. Furthermore, given the polynomial decay of the heat kernel, it is natural to expect that a sharp Harnack inequality in this setting would exhibit polynomial, rather than exponential, decay. Finally, it would be advantageous to relax the assumption of continuity at the initial time, thereby accommodating more general initial data.

We have demonstrated, (see \cite{DEMBNY_SIERZEGA_2023}) how to derive an optimal fractional counterpart of the Hadamard-Pini bound in $\mathbb{R}\times (0,\infty)$, without the need to derive the Li–Yau inequality. The bound is sharp and independent of the initial condition. Moreover, our contribution concerned obtaining an \emph{unconditional} bound, which is to say, that apart from the solution being classical and positive, we do not impose any further restrictions on the spatial growth of solutions and of the initial data. A different strategy was necessary for the computations, relying on Widder’s representation theorem, which expresses the solution as a convolution of the heat kernel $k(x,t):=\frac{1}{\pi}\left(\frac{t}{t^2+|x|^2}\right)$, with the initial data, and involves estimating the kernel itself. We concluded with the following result.

\begin{theorem} 
Let $u$ be a positive classical solution of \eqref{FH} on $\R\times(0,T)$.  
Given $0<t_1,t_2<T$ and $x_1,x_2\in \R$, we have
\begin{equation}\label{FLY}
 \left(\frac{t_1}{t_2}\right) C_*\leq\frac{u(x_2,t_2)}{u(x_1,t_1)}\leq \left(\frac{t_1}{t_2}\right)C^*,
\end{equation}
with 
\[
C_*=\frac{\sqrt{\kappa_0}-(t_2-t_1)(t_2+t_1)-|x_2-x_1|^2} {\sqrt{\kappa_0}-(t_2-t_1)(t_2+t_1)+|x_2-x_1|^2},
\]
\[
C^*=\frac{\sqrt{\kappa_0}+(t_2-t_1)(t_2+t_1)+|x_2-x_1|^2} {\sqrt{\kappa_0}+(t_2-t_1)(t_2+t_1)-|x_2-x_1|^2},
\]
where 
\[
\kappa_0=\Big(\big|x_2-x_1\big|^2+\big|t_2-t_1\big|^2\Big)\Big(\big|x_2-x_1\big|^2+\big|t_2+t_1\big|^2\Big).
\]
Moreover, 
\[
C_*=\frac{x_1-x_*}{x_2-x_*}\quad \mbox{ and }\quad C^*=\frac{x_1-x^*}{x_2-x^*},
\]
where $x_*,x^*\in \R$ satisfy
\[
\left(\frac{t_1}{t_2}\right)C_*=\frac{k(x_2-x_*,t_2)}{k(x_1-x_*,t_1)}\quad \mbox{ and }\quad \frac{k(x_2-x^*,t_2)}{k(x_1-x^*,t_1)}=\left(\frac{t_1}{t_2}\right)C^*.
\]
\end{theorem}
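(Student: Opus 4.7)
The proof will rest on two structural ingredients established in the preceding paper \cite{DEMBNY_SIERZEGA_2023}: the Widder-type representation
\[
u(x,t) = \int_{\R} k(x-y,t)\, d\mu(y),
\]
which expresses every positive classical solution of \eqref{FH} as a convolution of the Poisson kernel $k$ against a positive Borel measure $\mu$, and the elementary inequality that for positive continuous integrands,
\[
\inf_{y\in\R}\frac{k(x_2-y,t_2)}{k(x_1-y,t_1)} \le \frac{\int k(x_2-y,t_2)\,d\mu(y)}{\int k(x_1-y,t_1)\,d\mu(y)} \le \sup_{y\in\R}\frac{k(x_2-y,t_2)}{k(x_1-y,t_1)},
\]
with both extremes realized in the limit of Dirac masses concentrated at the extremizing points. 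This reduces the entire Harnack estimate to extremizing the rational function
\[
H(y) = \frac{k(x_2-y,t_2)}{k(x_1-y,t_1)} = \frac{t_2}{t_1}\cdot\frac{t_1^2+(x_1-y)^2}{t_2^2+(x_2-y)^2}
\]
over $y\in\R$, and identifying the extremizers with $x_*$ and $x^*$.

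The next step is to locate the critical points of $H$. Differentiating, clearing denominators, and collecting powers of $y$ gives a quadratic whose leading coefficient is $x_2-x_1$ and whose discriminant, after a straightforward expansion, coincides exactly with
\[
\bigl(|x_2-x_1|^2+(t_2-t_1)^2\bigr)\bigl(|x_2-x_1|^2+(t_2+t_1)^2\bigr) = \kappa_0.
\]
This computation is the algebraic heart of the proof and explains why $\kappa_0$ governs the sharp bound. Solving the quadratic by the usual formula produces the two critical points explicitly and lets us read off $x_*$ and $x^*$ in closed form.

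For the geometric interpretation I will exploit an L'Hopital-type identity: writing $H(y)=(t_2/t_1)\,g(y)/h(y)$, the condition $g'(y)h(y)=g(y)h'(y)$ at a critical point $y_*$ forces $g(y_*)/h(y_*)=g'(y_*)/h'(y_*)=(x_1-y_*)/(x_2-y_*)$, so that
\[
H(y_*) = \frac{t_2}{t_1}\cdot\frac{x_1-y_*}{x_2-y_*}.
\]
Plugging the two closed-form roots $x_*,x^*$ into this identity and simplifying the resulting rational expressions against $\sqrt{\kappa_0}$ recovers the asserted formulas for $C_*$ and $C^*$ as well as the ratio representation $C_*=(x_1-x_*)/(x_2-x_*)$ and its starred analogue; the product identity $C_*C^* = (t_1/t_2)^2$, which also emerges directly from Vieta applied to the critical-point quadratic, serves as a convenient cross-check.

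The main obstacle, beyond the bookkeeping in the discriminant computation, is to confirm that the two critical values actually realize the global minimum and maximum of $H$ on $\R$. Since $H$ is continuous with horizontal asymptote $t_2/t_1$ at $\pm\infty$, one must verify that this asymptotic value lies between the two critical values, so that neither extremum is lost at infinity; this follows from the fact that the product of the two critical values equals $1$, which together with positivity pins down their relative position with respect to $t_2/t_1$. Finally, one must correctly pair each root of the quadratic with $C_*$ versus $C^*$ according to sign, which is handled by monotonicity considerations in $y$ together with the explicit formulas.
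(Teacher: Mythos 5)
Your overall strategy---Widder representation of the positive solution as $u(x,t)=\int k(x-y,t)\,d\mu(y)$, reduction of the Harnack bound to extremizing the rational function $H(y)=k(x_2-y,t_2)/k(x_1-y,t_1)$ over $y$, locating the critical points via a quadratic whose discriminant is $\kappa_0$, and reading off the critical values by the L'H\^opital identity at a critical point of $g/h$---is the same route the authors describe taking in \cite{DEMBNY_SIERZEGA_2023}: the Discussion section explicitly says that paper ``expressed the solution via the Widder representation theorem'' and ``relied on estimating the Poisson kernel,'' and that ``the estimation led to a formula that admitted a geometric interpretation, in which both the radius and the center of a certain circle could be identified.'' Your discriminant computation $\Delta=\bigl(|x_2-x_1|^2+(t_2-t_1)^2\bigr)\bigl(|x_2-x_1|^2+(t_2+t_1)^2\bigr)$ and the observation that the roots coincide with the circle's intersections with $\{t=0\}$ are exactly the algebra behind that geometric picture, so I regard your proposal as the paper's argument recast cleanly rather than a genuinely different route.

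Two concrete points deserve correction. First, the step where you argue that the finite critical values bracket the asymptotic value: the product identity $H(x_*)H(x^*)=1$ only places $1$ between the two critical values, not $t_2/t_1$; when $t_2>t_1$ this does not by itself prevent the supremum from being ``lost at infinity.'' The clean fix is to work with $G(y)=\bigl(t_1^2+(x_1-y)^2\bigr)/\bigl(t_2^2+(x_2-y)^2\bigr)$, which tends to $1$ at $\pm\infty$ and equals $1$ at the unique finite solution of the associated linear equation; since $G$ has exactly the two critical points you found, monotonicity between consecutive critical points and the limit forces $G(x_*)\le 1\le G(x^*)$, hence $H(x_*)\le t_2/t_1\le H(x^*)$, as required. (One should also flag the degenerate case $x_1=x_2$, where your ``quadratic'' has vanishing leading coefficient and the L'H\^opital identity degenerates because $g'$ and $h'$ vanish simultaneously.) Second, be careful with the time factor: your own computation gives $H(x_*)=\tfrac{t_2}{t_1}\cdot\tfrac{x_1-x_*}{x_2-x_*}=\tfrac{t_2}{t_1}C_*$, not $\tfrac{t_1}{t_2}C_*$ as written in the theorem statement. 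A numerical spot check (e.g.\ $t_1=1$, $t_2=2$, $x_1=0$, $x_2=1$, where the actual infimum of $H$ is approximately $0.382=\tfrac{t_2}{t_1}C_*$ while $\tfrac{t_1}{t_2}C_*\approx 0.096$) confirms that the correct prefactor is $t_2/t_1$; you should not claim your identity ``recovers the asserted formulas'' when in fact it reveals an inversion in the stated time ratio, which appears to be a transcription error in the quoted theorem.
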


We may now restate the assertion using the heat kernel and the notation introduced in the theorem.
\begin{equation}
\frac{k(x_2-x_*,t_2)}{k(x_1-x_*,t_1)}\leq\frac{u(x_2,t_2)}{u(x_1,t_1)}\leq \frac{k(x_2-x^*,t_2)}{k(x_1-x^*,t_1)}
\label{thesis}
\end{equation}

As we will see this rather complicated expression carries a geometric meaning whereby points $A,B, x_*, x^*$ lie on a semicircle in the $x,t$ plane. It will become clear later that the Harnack inequality, more precisely, the ratio appearing in it, can be interpreted as a distance in the hyperbolic metric. This observation allows us to fundamentally change the method of deriving the inequality, by adopting a geometric perspective, which in turn leads to the result we are interested in. As previously indicated, the aim of this work is to generalize the above result to the multidimensional setting.

\begin{figure}[htp]
    \centering
    \includegraphics[width=8cm]{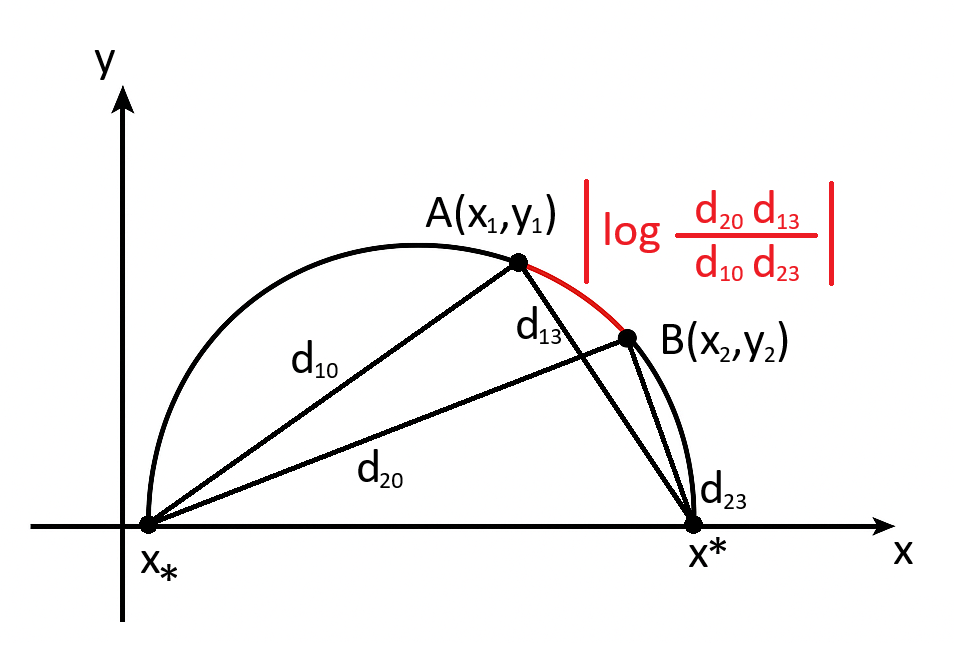}
    \caption{Semicircle}
    \label{semic}
\end{figure}

\section{Geometrical approach}

\subsection{Two dimentional inspiration}
We begin our discussion with an observation concerning distance in the hyperbolic metric in the Poincaré half-space $\mathbb{R}\times \mathbb{R}_+$. Given a semicircle centered on the horizontal axis, the hyperbolic distance, that is, the arc length between two points $A$ and $B$ is expressed by the absolute value of $\log (\frac{d_{20}d_{13}}{d_{10}d_{23}})$, where $d_{ab}$ denote the corresponding chords of the semicircle connecting $A$ and $B$ to the points where the semicircle intersects the horizontal axis (we denote them by $x_*$ and $x^*$). This idea is well illustrated in Figure \ref{semic}.

We will prove the following elementary fact.

\begin{theorem}
Consider two points \( A(x_1, t_1) \) and \( B(x_2, t_2) \) in \( \mathbb{R} \times \mathbb{R}_+ \). There exists a semicircle passing through \( A \) and \( B \), with its center lying on the line \( t = 0 \), such that its intersection points with the line \( t = 0 \) are exactly \( x_* \) and \( x^* \).

Let \( k(x, t; y) = \frac{1}{\pi}  \frac{t}{t^2 + (x - y)^2} \) denote the fundamental solution of the fractional heat equation in one spatial dimension, evaluated at the point \( (x, t) \), with initial data concentrated at the point \( y \). Then

$$
\frac{d_{20}d_{13}}{d_{10}d_{23}} = \left( \frac{k(A,x_*)}{k(B,x_*)}\frac{k(B,x^*)}{k(A,x^*)} \right)^{\frac{1}{2}},
$$

where $d_{ab}$ denotes the corresponding chords of the circle, as shown in Figure \ref{semic}.
\end{theorem}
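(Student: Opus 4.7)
The plan is to reduce the claim to a one-line algebraic identity by exploiting that the Euclidean distance from a point $P=(x,t)$ in the upper half plane to a point $(y,0)$ on the horizontal axis is precisely the square root of the denominator of the Cauchy/Poisson kernel
\[
k(P,y)=\frac{1}{\pi}\frac{t}{t^2+(x-y)^2}=\frac{t}{\pi\,\|P-(y,0)\|^2}.
\]

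First I would dispose of the geometric existence statement. For $A\ne B$ in $\R\times\R_+$ with $x_1\ne x_2$, the perpendicular bisector of the segment $\overline{AB}$ is not horizontal, so it meets $\{t=0\}$ in a unique point $O$; this is the center of the desired semicircle of radius $|OA|=|OB|$, and $x_*,x^*$ are the two intersections of that semicircle with the axis. The degenerate case $x_1=x_2$ admits no such semicircle and is handled by a limiting argument (or excluded as in the figure).

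Next I would rewrite each of the four kernel values $k(A,x_*)$, $k(B,x_*)$, $k(A,x^*)$, $k(B,x^*)$ using the distance identity above. With the labelling of Figure \ref{semic} this gives $k(A,x_*)=t_1/(\pi d_{10}^2)$, $k(B,x_*)=t_2/(\pi d_{20}^2)$, $k(A,x^*)=t_1/(\pi d_{13}^2)$ and $k(B,x^*)=t_2/(\pi d_{23}^2)$. The product appearing on the right-hand side of the claim then collapses immediately:
\[
\frac{k(A,x_*)}{k(B,x_*)}\cdot\frac{k(B,x^*)}{k(A,x^*)}
=\frac{t_1}{t_2}\cdot\frac{d_{20}^2}{d_{10}^2}\cdot\frac{t_2}{t_1}\cdot\frac{d_{13}^2}{d_{23}^2}
=\left(\frac{d_{20}\,d_{13}}{d_{10}\,d_{23}}\right)^{2},
\]
and extracting the positive square root yields the asserted equality.

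I do not expect any genuine obstacle: the proposition is a direct transcription of the observation that $\|P-(y,0)\|^2$ appears verbatim in the denominator of the Cauchy kernel. The only mild subtlety worth pointing out is that the antisymmetric pairing $k(A,x_*)/k(B,x_*)$ versus $k(B,x^*)/k(A,x^*)$ has been chosen precisely so that the $t_1,t_2$ factors cancel, leaving a pure ratio of chord lengths. This is exactly what makes the right-hand side recognizable as the cross-ratio underlying the hyperbolic distance in the Poincaré half plane, and hence motivates the geometric reinterpretation of the Harnack bound \eqref{thesis} that the rest of the paper develops.
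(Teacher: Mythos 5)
Your proof is correct and follows essentially the same route as the paper: both arguments rest on the single observation that the squared Euclidean distance from a point $(x,t)$ in the upper half-plane to a point $(y,0)$ on the axis is precisely the denominator $t^2+(x-y)^2$ of the Cauchy kernel, after which the cross-ratio identity is a two-line cancellation. The only differences are stylistic: you establish existence of the semicircle via the perpendicular-bisector argument rather than by solving explicitly for the center $\alpha$ and radius $r$ as the paper does (explicit formulas the paper never actually uses in this proof, though analogues reappear in the higher-dimensional section), and you flag the degenerate case $x_1=x_2$, which the paper silently ignores by dividing by $x_1-x_2$.
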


\begin{proof}
Let us define the circle passing through two points \( A (x_1, t_1) \) and \( B (x_2, t_2) \), with its center lying on the line \( t = 0 \), by the equation
\[
(x - \alpha)^2 + t^2 = r^2.
\]
Substituting the coordinates of points \( A \) and \( B \) into this equation, we obtain the following system
\[
\begin{cases}
(x_1 - \alpha)^2 + t_1^2 = r^2, \\
(x_2 - \alpha)^2 + t_2^2 = r^2.
\end{cases}
\]
Subtracting the first equation from the second yields
\[
(x_2 - \alpha)^2 - (x_1 - \alpha)^2 + t_2^2 - t_1^2 = 0,
\]
which simplifies to
\[
x_2^2 - x_1^2 + 2\alpha(x_1 - x_2) + t_2^2 - t_1^2 = 0,
\]
and solving for \( \alpha \), we get
\[
\alpha = \frac{x_1^2 + t_1^2 - x_2^2 - t_2^2}{2(x_1 - x_2)}.
\]

Next, we compute the radius \( r \). The formula for the radius is given by
\[
r^2 = (x_1 - \alpha)^2 + t_1^2.
\]
Substituting the expression for \( \alpha \), we obtain
\[
r^2 = x_1^2 - 2x_1  \frac{x_1^2 + t_1^2 - x_2^2 - t_2^2}{2(x_1 - x_2)} + \left( \frac{x_1^2 + t_1^2 - x_2^2 - t_2^2}{2(x_1 - x_2)} \right)^2 + t_1^2,
\]
and therefore,
\[
r = \sqrt{x_1^2 - x_1  \frac{x_1^2 + t_1^2 - x_2^2 - t_2^2}{x_1 - x_2} + \left( \frac{x_1^2 + t_1^2 - x_2^2 - t_2^2}{2(x_1 - x_2)} \right)^2 + t_1^2}.
\]

To find the points of intersection of the circle with the line \( t = 0 \), we solve
\[
(x - \alpha)^2 = r^2 \quad \Rightarrow \quad (x - \alpha - r)(x - \alpha + r) = 0,
\]
which yields
\[
x_* = \alpha - r, \quad x^* = \alpha + r.
\]

We compute the Euclidean lengths of the chords, as illustrated in Figure \ref{semic}
\[
d_{20} = \sqrt{t_2^2 + (x_2 - x_*)^2}, \quad
d_{10} = \sqrt{t_1^2 + (x_1 - x_*)^2},
\]
\[
d_{23} = \sqrt{t_2^2 + (x_2 - x^*)^2}, \quad
d_{13} = \sqrt{t_1^2 + (x_1 - x^*)^2}.
\]

We now consider the logarithmic input
\[
\frac{d_{20}  d_{13}}{d_{10}  d_{23}} = 
\frac{\sqrt{t_2^2 + (x_2 - x_*)^2}  \sqrt{t_1^2 + (x_1 - x^*)^2}}{\sqrt{t_1^2 + (x_1 - x_*)^2}  \sqrt{t_2^2 + (x_2 - x^*)^2}}.
\]

On the other hand, consider the fundamental solution \( k(x, t; y) \) of the one-dimensional fractional heat equation, given by
\[
k(x, t; y) = \frac{1}{\pi}  \frac{t}{t^2 + (x - y)^2}.
\]
We compute the kernel ratios
\[
\frac{k(B, x_*)}{k(A, x_*)} = \frac{t_2}{t_1}  \frac{t_1^2 + (x_1 - x_*)^2}{t_2^2 + (x_2 - x_*)^2},
\quad
\frac{k(B, x^*)}{k(A, x^*)} = \frac{t_2}{t_1}  \frac{t_1^2 + (x_1 - x^*)^2}{t_2^2 + (x_2 - x^*)^2}.
\]

We now relate the geometric and analytic expressions
\[
\frac{d_{20} d_{13}}{d_{10}  d_{23}} =
\sqrt{
\frac{t_2^2 + (x_2 - x_*)^2}{t_1^2 + (x_1 - x_*)^2}
\frac{t_1^2 + (x_1 - x^*)^2}{t_2^2 + (x_2 - x^*)^2}
}=
\]
\[
=
\sqrt{
\frac{k(A, x_*)}{k(B, x_*)}
\frac{k(B, x^*)}{k(A, x^*)}
}
=
\left(
\frac{k(A, x_*)}{k(B, x_*)}
\frac{k(B, x^*)}{k(A, x^*)}
\right)^{1/2}.
\]

This completes the derivation of the geometric expression in terms of the Poisson kernel.
\end{proof}

As observed, the argument of the logarithm, describing the arc length between two points
$A$ and $B$ on a circle passing through them, is directly related to the ratio of the fundamental solutions evaluated at $A$ and $B$.

\subsection{Plane in three dimentional space}

Now, as shown in Figure \ref{circplane}, given four points in $\mathbb{R}^2\times \mathbb{R}_+$. We denote them by $A(x_1, y_1, t_1)$, $B(x_2, y_2, t_2)$ and their respective projections onto the plane $\lbrace (x,y,t)\;|\; t=0 \rbrace$, i.e. $A'(x_1, y_1, 0)$, $B' = (x_2, y_2, 0)$.

We will seek the desired circle in the plane that contains four given points. The idea is to exploit the observation mentioned before concerning the Poincaré metric in two dimensions, applied within an appropriate subspace. We will then make use of existing formulas to compute the arc length, which is directly related to the fundamental solution of the fractional heat equation. 

The plane is given by

$$
P=\lbrace (x,y,t)\;|\; \alpha x+\beta y+\gamma t+\delta=0 \rbrace.
$$

Since the plane is parallel to the vertical axis, it means $\gamma=0$. Therefore

$$
P=\lbrace (x,y,t)\;|\; \alpha x+\beta y+\delta =0 \rbrace.
$$

We shall now determine the constants $\alpha$,$\beta$ and $\delta$.

Since, both $A$ and $B$ belong to the semicircle, they have to satisfy

\begin{equation}
\begin{cases}

\alpha x_1+\beta y_1+\delta =0 \\
\alpha x_2+\beta y_2+\delta=0.

\end{cases}
    \label{plane}
\end{equation}

And by subtracting the second from the first equation, we obtain the following

$$
\alpha(x_1-x_2)+\beta (y_1-y_2)=0.
$$
Let $c$ be an arbitrary constant. Then, setting $\alpha=c(y_2-y_1)$ and $\beta=c(x_1-x_2)$, we observe that these choices satisfy the equation above. Consequently, $\delta$ is given by $-c(y_2-y_1)x_1-c(x_1-x_2)$.

Finally, by substituting the computed constants into the plane equation, we obtain the expression
$$
P= \lbrace (x,y,t)\;|\; (y_2-y_1)x+(x_1-x_2)y=(y_2-y_1)x_1+(x_1-x_2)y_2, \; t\in \mathbb{R_+} \rbrace,
$$
which is 

\begin{equation}\label{PL}
 P= \lbrace (x,y,t)\;|\; (y_2-y_1)(x-x_1)=(x_2-x_1)(y-y_1), \; t\in \mathbb{R_+} \rbrace .
\end{equation}

\begin{figure}[htp]
    \centering
    \includegraphics[width=12cm]{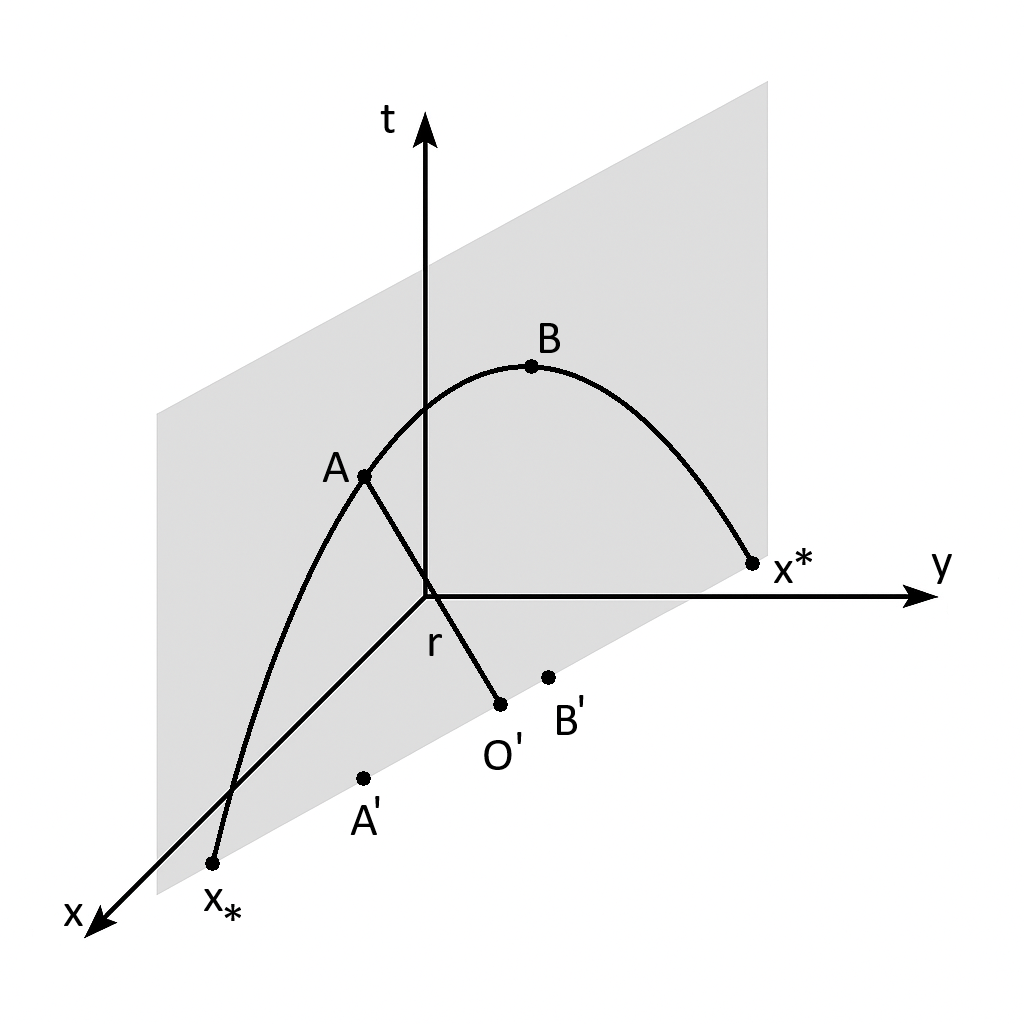}
    \caption{Semicircle in the special plane}
    \label{circplane}
\end{figure}

\subsection{Circle radius and center}
To determine the center and radius of the desired circle, we employ well-known formulas that lead to systems of equations. 

The plane is defined by the equation
\begin{equation}
    (x_2 - x_1)(y - y_1) - (y_2 - y_1)(x - x_1) = 0.
    \label{eq:plane}
\end{equation}

Let the point \( O' = (x_0, y_0, 0) \) be the center of the circle. It has to satisfy two conditions:
it lies in the plane and is equidistant from A and B. Therefore, we express these conditions as follows

\begin{equation}
    \begin{cases}
    (x_2 - x_1)(y_0 - y_1) - (y_2 - y_1)(x_0 - x_1) = 0 \\
     \| O' - A \|^2 = \| O' - B \|^2,
\end{cases}
\label{equidist}
\end{equation}

which expands to

$$
\begin{cases}
    (x_2 - x_1)(y_0 - y_1) - (y_2 - y_1)(x_0 - x_1) = 0 \\
    (x_0 - x_1)^2 + (y_0 - y_1)^2 + t_1^2 = (x_0 - x_2)^2 + (y_0 - y_2)^2 + t_2^2 
\end{cases}
$$

or equivalently

$$
\begin{cases}
    (x_2 - x_1)(y_0 - y_1) - (y_2 - y_1)(x_0 - x_1) = 0 \\
    -2x_0(x_1 - x_2) - 2y_0(y_1 - y_2) = t_2^2 - t_1^2 - (x_1^2 - x_2^2) - (y_1^2 - y_2^2).
\end{cases}
$$

Now we need to solve this system of equations. Once we do this, finding radius r does not pose any difficulty. When the center \( O' = (x_0, y_0, 0) \) is found, the radius \( r \) can be computed from either point \( A \) or \( B \) as
$$
    r = \sqrt{(x_0 - x_1)^2 + (y_0 - y_1)^2 + t_1^2} \quad \mbox{or}\quad  r = \sqrt{(x_0 - x_2)^2 + (y_0 - y_2)^2 + t_2^2}.
$$

Let us denote the following constants to simplify the system:
$a=x_2-x_1$, $b=y_2-y_1$ and $E=t_2^2-t_1^2-(x_1-x_2)^2-(y_1-y_2)^2$. At that point, the system becomes

$$
\begin{cases}
    ay_0-bx_0 = ay_1-bx_1 \\
    -2ax_0-2by_0=E.
\end{cases}
$$

From the first equation, we obtain
\[
a y_0 = b x_0 + a y_1 - b x_1 \Rightarrow 
y_0 = \frac{b x_0 + a y_1 - b x_1}{a}.
\]

Substituting into the second equation yields

$$
-2a x_0 - 2b \cdot \left( \frac{b x_0 + a y_1 - b x_1}{a} \right) = E ,
$$
which is
$$
-2a x_0 - \frac{2b^2 x_0 + 2ab y_1 - 2b^2 x_1}{a} = E.
$$

We combine terms
$$
\left(-2a - \frac{2b^2}{a}\right)x_0 = E + 2b y_1 - \frac{2b^2 x_1}{a},
$$

and multiply both sides by $a$
$$
(-2a^2 - 2b^2)x_0 = aE + 2ab y_1 - 2b^2 x_1.
$$

We therefore arrive at the following expression for $x_0$

$$
x_0 = \frac{ -a E - 2ab y_1 + 2b^2 x_1 }{ 2(a^2 + b^2) }=
$$

$$
=\frac{ -a (t_2^2-t_1^2-(x_1-x_2)^2-(y_1-y_2)^2) - 2ab y_1 + 2b^2 x_1 }{ 2(a^2 + b^2) }=
$$

$$
\frac{ -(x_2-x_1) (t_2^2-t_1^2-(x_1-x_2)^2-(y_1-y_2)^2) - 2(x_2-x_1)(y_2-y_1) y_1 }{ 2((x_2-x_1)^2 + (y_2-y_1)^2) }+
$$

$$
+\frac{ 2(y_2-y_1)^2 x_1 }{ 2((x_2-x_1)^2 + (y_2-y_1)^2) }=
$$

$$
=\frac{ -(x_2-x_1) (t_2^2-t_1^2-(x_1-x_2)^2-(y_1-y_2)^2) }{ 2((x_2-x_1)^2 + (y_2-y_1)^2) } +
$$

$$
\frac{ - 2(x_2-x_1)(y_2-y_1) y_1 - 2(x_2-x_1)^2 x_1 }{ 2((x_2-x_1)^2 + (y_2-y_1)^2) } +x_1.
$$

Since the algebraic expressions become very large, we will stick to the previously introduced notation. 

\begin{equation}
y_0 = \frac{ b x_0 + a y_1 - b x_1 }{ a }= \frac{b}{a}(x_0-x_1) + y_1.
\end{equation}

The formulas are particularly convenient, as they enable the differences to be written directly. We have

$$
x_0 - x_1 = \frac{ -(x_2-x_1) (t_2^2-t_1^2-(x_1-x_2)^2-(y_1-y_2)^2) }{ 2((x_2-x_1)^2 + (y_2-y_1)^2) }
$$

$$
+ \frac{ - 2(x_2-x_1)(y_2-y_1) y_1 - 2(x_2-x_1)^2 x_1 }{ 2((x_2-x_1)^2 + (y_2-y_1)^2) }.
$$

$$
y_0 - y_1 = \frac{b}{a}(x_0-x_1).
$$

And the radius

\begin{equation}
r = \sqrt{(x_0 - x_1)^2 + (y_0 - y_1)^2 + t_1^2}
\label{radius}
\end{equation}

The points of intersection of the circle with the plane $t=0$ can be easily computed. It suffices to move a distance $r$ from the center of the circle along the diameter. This yields

\begin{equation}
    x_* = O' - r \frac{A'B'}{|A'B'|}, \quad x^* = O' + r \frac{A'B'}{|A'B'|}
    \label{xstar}
\end{equation}

\subsection{Ratio of the chords}

Following \cite{GAROFALO_2019}, we introduce the Poisson kernel as the fundamental solution to the heat equation involving the half-laplacian. Below, we present the formula that will be used to relate it to the arc length of the corresponding circle.

\begin{equation}
P(\vec{x},t) = c_n \frac{t}{(t^2 + |\vec{x}|^2)^{\frac{n+1}{2}}}, \quad \mbox{ with }\quad c_n = \frac{\Gamma (\frac{n+1}{2})}{\pi^{\frac{n+1}{2}}}.
    \label{poisson}
\end{equation}

In the context of our current problem, $n=2$. Hence $\Gamma (\frac{3}{2})=\frac{\sqrt{\pi}}{2}$ and $c_2=\frac{\frac{\sqrt{\pi}}{2}}{\pi \sqrt{\pi}}=\frac{1}{2\pi}$. Let $K(\vec{x},t;\vec{y})$ represent the Poisson kernel with a spatial shift.

$$
K(x,y,t;u,v) = \frac{1}{2\pi} \frac{t}{(t^2+|(x,y,0)-(u,v,0)|^2)^{\frac{3}{2}}}.
$$

Following the relation depicted in Figure \ref{semic}, we focus on the chord-based expression and relate it to the Poisson kernel. The chords are given by formulas below, which follow directly from the Pythagorean theorem

$$
D_{20}=\sqrt{t_2^2+|B'-x_*|^2}, \quad D_{13}=\sqrt{t_1^2+|A'-x^*|^2},
$$
$$
D_{10}=\sqrt{t_1^2+|A'-x_*|^2}, \quad D_{23}=\sqrt{t_2^2+|B'-x^*|^2}.
$$

Then, the input of the logarithm is given by the following
$$
\frac{D_{20}D_{13}}{D_{10}D_{23}}=\frac{\sqrt{t_2^2+|B'-x_*|^2}\sqrt{t_1^2+|A'-x^*|^2}}{\sqrt{t_1^2+|A'-x_*|^2}\sqrt{t_2^2+|B'-x^*|^2}}.
$$
On the other hand

$$
\frac{K(B,x_*)}{K(A,x_*)}=\frac{t_2}{t_1} \left( \frac{t_1^2+|A'-x_*|^2}{t_2^2+|B'-x_*|^2}\right)^{\frac{3}{2}}, \quad \frac{K(B,x^*)}{K(A,x^*)}=\frac{t_2}{t_1} \left( \frac{t_1^2+|A'-x^*|^2}{t_2^2+|B'-x^*|^2}\right)^{\frac{3}{2}}.
$$

We observe a similarity between the formulas, and thus we may combine them.

$$
\frac{D_{20}D_{13}}{D_{10}D_{23}} = \sqrt{\frac{t_2^2+|B'-x_*|^2}{t_1^2+|A'-x_*|^2}\frac{t_1^2+|A'-x^*|^2}{t_2^2+|B'-x^*|^2}}=
$$

$$
=\sqrt{\left(\frac{t_2}{t_1}\right)^{2/3}\left(\frac{K(A,x_*)}{K(B,x_*)}\right)^{2/3}\frac{t_1^2+|A'-x^*|^2}{t_2^2+|B'-x^*|^2}}=
$$

$$
=\sqrt{\left(\frac{K(A,x_*)}{K(B,x_*)}\right)^{2/3}\left(\frac{K(B,x^*)}{K(A,x^*)}\right)^{2/3}}=\left( \frac{K(A,x_*)}{K(B,x_*)}\frac{K(B,x^*)}{K(A,x^*)}\right)^{\frac{1}{3}}
$$

This crucial geometric insight provides the foundation for extending the results of \cite{DEMBNY_SIERZEGA_2023}, by avoiding the technical difficulties typically encountered in higher-dimensional settings. In the next section, we demonstrate the corresponding relation in $n$ spatial dimensions and draw a comparison with equation \eqref{thesis}.

\section{The main formula}

Here, we formulate our reasoning in the most general setting. Now the four points are $A (x_1, \dots, x_n, t_A)$, $B (y_1, \dots,  y_n, t_B)$, $A' (x_1, \dots, x_n, 0)$ and \\ $B' (y_1, \dots,  y_n, 0)$.

Systems \eqref{plane} and \eqref{equidist} are, and will remain in higher dimensions, linear systems with respect to the coordinates of $A,B$ and $O'$. By construction, these systems admit solutions, and one may therefore, in general, apply standard formulas for solving linear systems. For the sake of elegance and clarity, the authors omit these computations and proceed directly to the main formula. 

Similarly, obtaining expressions for $r, x_*$ and $x^*$ in higher dimensions is straightforward, as formulas \eqref{radius} and \eqref{xstar} extend naturally to the $n$-dimensional setting.

By employing analogous expressions for $D_{ab}$ we have

$$
D_{20}=\sqrt{t_B^2+|B'-x_*|^2}, \quad D_{13}=\sqrt{t_A^2+|A'-x^*|^2},
$$
$$
D_{10}=\sqrt{t_A^2+|A'-x_*|^2}, \quad D_{23}=\sqrt{t_B^2+|B'-x^*|^2}.
$$

At this point, we recall the formula for the Poisson kernel \eqref{poisson}
and its variant

$$
K(\vec{x},t;\vec{y}) = c_n \frac{t}{(t^2 + |\vec{x}-\vec{y}|^2)^{\frac{n+1}{2}}}.
$$

By repeating the calculations carried out in the preceding section, we obtain

$$
\frac{K(B,x_*)}{K(A,x_*)}=\frac{c_n \frac{t_B}{(t_B^2 + |x_*-B'|^2)^{\frac{n+1}{2}}}}{c_n \frac{t_A}{(t_A^2 + |x_*-A'|^2)^{\frac{n+1}{2}}}}=\frac{t_B}{t_A} \left( \frac{t_A^2 + |x_*-A'|^2}{t_B^2 + |x_*-B'|^2}\right)^{\frac{n+1}{2}},
$$

and thus

$$
\frac{K(B,x^*)}{K(A,x^*)}=\frac{t_B}{t_A} \left( \frac{t_A^2 + |x^*-A'|^2}{t_B^2 + |x^*-B'|^2}\right)^{\frac{n+1}{2}}.
$$

Finally, we are in a position to carry out the final computation that connects the above results. This constitutes the central point of the present work.

$$
\frac{D_{20}D_{13}}{D_{10}D_{23}} = \sqrt{\frac{t_B^2+|B'-x_*|^2}{t_A^2+|A'-x_*|^2}\frac{t_A^2+|A'-x^*|^2}{t_B^2+|B'-x^*|^2}}=
$$

$$
=\sqrt{\left(\frac{t_B}{t_A}\right)^{2/n+1}\left(\frac{K(A,x_*)}{K(B,x_*)}\right)^{2/n+1}\frac{t_1^2+|A'-x^*|^2}{t_2^2+|B'-x^*|^2}}=
$$
$$
=\sqrt{\left(\frac{K(A,x_*)}{K(B,x_*)}\right)^{2/n+1}\left(\frac{K(B,x^*)}{K(A,x^*)}\right)^{2/n+1}}=
$$

$$
=\left( \frac{K(A,x_*)}{K(B,x_*)}\frac{K(B,x^*)}{K(A,x^*)}\right)^{\frac{1}{n+1}}.
$$

It has been established that

\begin{theorem}
    Let $A,B\in \mathbb{R}^n\times \mathbb{R}_+$ be a couple of points, and let $x_*, x^*$ denote the points of intersection of the circle lying in the plane containing A and B, passing through A and B, with the plane $\lbrace (x,t)\;|\;x\in \mathbb{R}^n, \; t=0 \rbrace$. Furthermore, let $K(\vec{x},t;\vec{y})$ denote the Poisson kernel, i.e.

    $$
    K(\vec{x},t;\vec{y}) = c_n \frac{t}{(t^2 + |\vec{x}-\vec{y}|^2)^{\frac{n+1}{2}}}.
    $$

    Then the following formula holds

\begin{equation}
    \frac{D_{20}D_{13}}{D_{10}D_{23}} = \left( \frac{K(A,x_*)}{K(B,x_*)}\frac{K(B,x^*)}{K(A,x^*)}\right)^{\frac{1}{n+1}},
    \label{maineq}
\end{equation}

where $D_{ab}$ are the chords described by the expressions

$$
D_{20}=\sqrt{t_B^2+|B'-x_*|^2}, \quad D_{13}=\sqrt{t_A^2+|A'-x^*|^2},
$$

$$
D_{10}=\sqrt{t_A^2+|A'-x_*|^2}, \quad D_{23}=\sqrt{t_B^2+|B'-x^*|^2}.
$$
\end{theorem}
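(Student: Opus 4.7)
The plan is to reduce the geometry of the theorem to the two-dimensional configuration already analyzed in the preceding subsection, and then to match the $n$-dimensional Poisson kernel to the chord product by a direct power count. The critical observation is that the entire geometric picture --- the plane containing $A$ and $B$ with vertical generators, the circle through $A$ and $B$, its center $O'$, and the antipodal points $x_*,x^*$ --- lives in a single two-dimensional affine plane $P\subset\mathbb{R}^n\times\mathbb{R}$. In particular, inside $\mathbb{R}^n$ the four points $A',B',x_*,x^*$ are collinear, all lying on the line through $A'$ and $B'$.

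First I would record the $n$-dimensional analogues of \eqref{PL}, \eqref{equidist}, \eqref{radius}, and \eqref{xstar}. The plane $P$ is cut out by $n-1$ linear equations expressing that $\vec{x}-A'$ is a scalar multiple of $B'-A'$. The center $O'=(\vec{x}_0,0)$ is the unique point in $P\cap\{t=0\}$ equidistant from $A$ and $B$, determined by one further linear equation on the one-dimensional line $P\cap\{t=0\}$; the radius is $r=\sqrt{|O'-A'|^2+t_A^2}$, and $x_*=O'-r(B'-A')/|B'-A'|$, $x^*=O'+r(B'-A')/|B'-A'|$. Since $A'\neq B'$ these systems are nondegenerate and admit unique solutions in every dimension $n\geq 2$. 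Following the authors, I would reference these constructions rather than grinding through the explicit coordinates.

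Next I would compute both sides of \eqref{maineq}. By the Pythagorean theorem, the chord product satisfies
$$\left(\frac{D_{20}D_{13}}{D_{10}D_{23}}\right)^{2}=\frac{(t_B^{2}+|B'-x_*|^{2})(t_A^{2}+|A'-x^*|^{2})}{(t_A^{2}+|A'-x_*|^{2})(t_B^{2}+|B'-x^*|^{2})}.$$
Direct substitution into the $n$-dimensional Poisson kernel gives
$$\frac{K(B,x_*)}{K(A,x_*)}=\frac{t_B}{t_A}\left(\frac{t_A^{2}+|A'-x_*|^{2}}{t_B^{2}+|B'-x_*|^{2}}\right)^{(n+1)/2},$$
together with the analogous identity at $x^*$. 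Raising $K(A,x_*)/K(B,x_*)$ and $K(B,x^*)/K(A,x^*)$ to the power $2/(n+1)$, multiplying, and noting that the factors $t_A/t_B$ and $t_B/t_A$ cancel, recovers exactly the squared chord ratio above. Extracting a square root produces the $1/(n+1)$ exponent.

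The only substantive obstacle is keeping the linear algebra for $P$ and $O'$ clean in high dimensions, but this is cosmetic: once one restricts to $P\cap\{t=0\}$, which is a single line in $\mathbb{R}^n$, the system collapses to exactly the one-variable problem solved for $n=1$ in the first subsection. All remaining steps are direct extensions of the two- and three-dimensional computations already performed, with the exponent $1/(n+1)$ arising transparently from the $(n+1)/2$ power in the Poisson kernel.
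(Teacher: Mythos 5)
Your proposal is correct and follows essentially the same route as the paper: reduce to the vertical two-dimensional plane containing $A$ and $B$, write the chord lengths via the Pythagorean theorem, write the Poisson kernel ratios at $x_*$ and $x^*$, and match exponents so that the $(n+1)/2$ power in the kernel produces the $1/(n+1)$ power in the chord identity. The one small organizational addition you make --- explicitly noting that $A',B',x_*,x^*$ are collinear in $\mathbb{R}^n$ so the search for $O'$ collapses to a one-variable linear equation --- is implicit in the paper's construction but is a nice clarification, not a different method.
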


\section{Discussion}
The classical parabolic Harnack estimate, originally established independently by Hadamard and Pini \cite{HADAMARD_1954,PINI_1954} was initially obtained through a suitable integration of the Li–Yau differential inequality (see \cite{HAMILTON_2011,LI_YAU_1986}). The integration was performed along the line segment connecting two points. It becomes apparent that a choice of an appropriate curve connecting two points is a significant aspect in the theory of Harnack inequalities.

In \cite{DEMBNY_SIERZEGA_2023}, the Harnack inequality was established by expressing the solution via the Widder representation theorem, as the convolution of the fundamental solution with the initial condition. We managed to derive the Harnack inequality without using the Li–Yau inequality; however, this required paying the price of extensive calculations. The proof relied on estimating the Poisson kernel. 

In that work, the estimation led to a formula that admitted a geometric interpretation, in which both the radius and the center of a certain circle could be identified. This observation motivated the conjecture that, in higher dimensions, a similar connection with circular geometry should persist and the curve we seek is an arc of a circle.

We postulated the existence of a circle determined by the points $A,B,A'$, $B'$ and this led to the emergence of a new formula, which, as it turns out, aligns with the authors’ expectations. Whereas the earlier work focused on a technical estimation, the present work provides a geometric interpretation. Grasping it, allowed us to bypass the extensive computations and focus on well-established formulas.

\subsection*{Acknowledgements}
This research was partially supported by the Polish National Science Center grant SONATA BIS no.\,2020/38/E/ST1/00596.

The second author also acknowledges the support of the NAWA Bekker Scholarship Programme BPN/BEK/2021/1/00277.   

All Figures were generated using ChatGPT.

\bibliographystyle{siam} 
\bibliography{biblio} 
 \end{document}